\newtheorem{theorem}{Theorem} [section]
\newtheorem*{remark}{Remark}
\newtheorem{lemma}[theorem]{Lemma}
\numberwithin{equation}{section}
\newcommand{\diam}{\operatorname{diam}}
\newcommand{\length}{\operatorname{\length}}
\newcommand{\lenghth}{\operatorname{lenghth}}
\newcommand{\vep}{\operatorname{\varepsilon}}
\newcommand{\SL}{\operatorname{SL}}
\newcommand{\dm}{\operatorname{d}}
\def\I{\operatorname{I}}
\def\II{\operatorname{II}}
\begin{document}
	\title{Margulis-Ruelle inequality for general manifolds}
	\author{Gang Liao  and Na Qiu}

\email{lg@suda.edu.cn}

\email{na.qiu@qq.com}

\thanks{2010 {\it Mathematics Subject Classification}. 37A35, 37C40, 37D50}

\keywords{Metric entropy,  Lyapunov exponent, noncompactness, boundary}

\thanks{School of Mathematical Sciences, Center for Dynamical Systems and Differential Equations, Soochow University,
Suzhou 215006,  China; This research  was partially supported by NSFC (11701402,  11790274),  IEPJ and NSF (BK 20170327) of  Jiangsu Province}
	
\begin{abstract}
	
In this paper we investigate the Margulis-Ruelle inequality for  general Riemannian manifolds (possibly noncompact and with boundary) and  show that it always holds under integrable  condition.
\end{abstract}

\maketitle
\section{Introduction}

In the theroy of  differentiable dynamical systems,  Margulis-Ruelle inequality \cite{Ruelle} is a fundamental  formulation.  This  inequality establishes   the   bridge  between   the chaotic index metric entropy and the stable index Lyapunov exponent,  being employed  especially  frequently   in  the study  of  hyperbolic behaviors  in various settings (uniformly, nonuniformly  or partially hyperbolic systems).

Let $f$ be a continuous map on a  metric  space $M$.    For  $f$-invariant Borel probability measure $\mu$  and    measurable partition $ \xi $  of  $ M$ with partition  entropy 
$
H_{\mu}(\xi)=\int\log\mu^{-1}(\xi(x))d\mu<\infty
$, 
the metric entropy of $f$ with respect to  $ \xi $ is given by
\begin{eqnarray}\label{entropy}
h_{\mu}(f,\xi)=\lim\limits_{n\to +\infty}\frac{1}{n}H_{\mu}(\bigvee_{i=0}^{n-1}f^{-i}(\xi))
\end{eqnarray}
and   the metric entropy  $h_{\mu}(f)$ of $f$  with respect to $\mu$ is defined as the supremum of 
(\ref{entropy}) over all $\xi$ with finite partition entropy.  

For  $f$  differentiable   on a Riemannian manifold $M$,  relative to   a direction $v\in T_xM$ with $x\in M$,  Lyapunov exponent along $v$  is  given  by the limit
\begin{eqnarray}\label{limit}\lim_{n\to +\infty}\frac{1}{n}\log \|D_xf^nv\|,\end{eqnarray}
which exists  (possibly -$\infty$), for almost every point $x$ of every  $f$-invariant Borel  probability measure $\mu$ by Oseledets theorem \cite{Oseledets, FLQ} if $\int \log^+\|D_xf\| d\mu < \infty$, where $\log^+ t=\max\{\log t, 0\}$.   Moreover, the finite dimension of $M$ brings about exactly the same number of  values  of the limit (\ref{limit}), which we denote as $\lambda_i(f,x)$ $(1\le i\le \dim M)$. 

 When the manifold $M$ is compact and without boundary,  the above two mechanisms could be linked together  as the layout of  Margulis-Ruelle inequality \cite{Ruelle}:  the metric entropy  of  $\mu$ is  not beyond the sum of positive Lyapunov exponents:
\begin{eqnarray}\label{MR}h_{\mu}(f)\leq \int \sum_{\lambda_i(f,x)> 0} \lambda_i(f,x) \,d\mu.\end{eqnarray}

However, for general manifolds,  the availability of (\ref{MR})  is  unkown yet. There are many  spaces which are expected to be taken into account, but not in the previous context, such as  $\mathbb{R}^n$, Lie groups  $\SL(n, \mathbb{R})$, billiard tables,  moduli spaces, etc, where both noncompactness and boundary  lead to the actual  obstructions. 
As the case exhibits,  if  $M$ is noncompact, the inequality (\ref{MR}) possibly doesn't  hold  \cite{Riquelme}.   With  boundary   permissible,    (\ref{MR})  was dealt with  in  (\cite{KSLP}),  involving  systems on  manifolds with finite capacity.  Regarding the two aspects,     in this paper,  we   investigate this problem in  a  general situation, i.e.,  for  manifolds which are  possibly both   noncompact 
and with boundary.

Let $M$ be a Riemannian mainfold,    $f$ a $C^{1}$ map from an open subset $U\subset M$ to $M\setminus \partial M$, and  Borel probability measure $\mu$ on $M\setminus \partial M$ is $f$-invariant: $\mu(f^{-1}(B))=\mu(B)$ for any Borel set $B\subset M\setminus \partial M$.  The general setting is as follows:
\smallskip

(A) {\it Distortion on  $f$}:  there exist  $0<\alpha<1$,\,$C, a>1$ such that for any $x\in U,\,\,y\in B(x, \min\{1,\,\,d^a(x, \partial M) \})$,  
   $$\frac{\big{|}\|D_xf\|-\|D_yf\|\big{|}}{d(x,y)^{\alpha}} \le C  d_0(x, \partial M )^{-a}, $$
where $d_0(x, \partial M )=\min\{d(x, \partial M ), d^{-1}(x, x_0)\}$  for any preassigned  $x_0\in M$.

\smallskip

(B) {\it Integrability on $\mu$}:
 $$\int_U \max\{\log^+ \|D_xf\|,\,\,|\log d_0(x,\partial M)|,\,\, |\log \rho_{b}(x)|,\,\,\log N_b(x)\,\}\,d\mu<\infty,$$
where  $\rho_b(x)$ for some $b\ge1$  is the regular radius given by the minimum of $\varrho_b(y)$ with  \,$d_0(y, \partial M)\ge d_0(x,\partial M) $ and  $\varrho_b(y)=\max\{0<r\le 1: \,\|D_{w}\exp_{x_1}\|\le b$,\, $ \|D_{x_2}\exp_{x_1}^{-1}\|\le b,\quad \forall \,\,x_i\in B(y,r),\,\,\,i=1,2,\, \,w=\exp^{-1}_{x_1}(x_2)\}$;   $N_b(x)$ is the regular tankage given by    the minimal cardinality among   covers of $\{y: d_0(y, \partial M)\ge  d_0(x, \partial M)\}$ by balls with  regular radius.

\begin{remark} The properties on  $d_0$ is independent of the choice of $x_0$.  In  fact,  $d^{-1}(x,x_0)$ represents the distance of $x$ relative to the infinity: ``away from $x_0$'' is equavlent to ``close to the infinity".

	\begin{remark}
	
  (A)(B) are naturally satisfied if $M$ is compact and without boundary.
	 In (A), we adopt $\|D_xf\|-\|D_yf\|$ instead of $\|D_xf-D_yf\|$, which avoids the assumptions on the  local trivialization of manifolds as in  \cite{KSLP} since we just compare real numbers $\|D_xf\|$ and $\|D_yf\|$.   In (B), the first one is a natrual property guaranteeing the exisitence of Lyapunov 
	exponents in Oseledets theorem,  the next  three  could be verified in ususl analytic settings with some nondegenerate polynomial estimates,  for example,   for any measure absolute to Lebesgue measure such that 
	\begin{itemize}	
		\item the density  decreases polynomially  of order  bigger than $\dim M$ near  infinity   and  increases polynomially  of order smaller than $\dim M$  near  boundary;\\ 
		\item the  regular radius decreases polynomially and   the regular tankage  increases  polynomially    near   infinity and  boundary.  
	\end{itemize}	
		
	\end{remark}

\end{remark}
\begin{theorem}\label{MR Thm}
Let $M$ be a Riemannian manifold and $f$  a $C^{1}$  map  from an open subset $U\subset M$ to $M\setminus \partial M$  satisfying (A). Then  for any $f$-invariant Borel probability measure satisfying $(B)$,   Margulis-Ruelle inequality  holds true.

\end{theorem}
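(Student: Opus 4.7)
The plan is to extend Ma\~n\'e's classical proof of the Margulis--Ruelle inequality---which on a compact boundary-less manifold combines the partition bound $h_\mu(f,\xi)\le H_\mu(\xi\mid f^{-1}\xi)\le \int\log T(x,\xi)\,d\mu$ with a local volume estimate of $T$ in terms of the Jacobian spectrum of $Df$---to the present non-compact setting with boundary. The two essential adaptations are (i) the construction of a finite-entropy countable partition whose cells shrink appropriately toward $\partial M$ and toward infinity, and (ii) the control of the local linearization of $f^n$ through the H\"older estimate (A) and the integrability in (B).

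I would first stratify $M\setminus\partial M$ into shells $S_m=\{x:e^{-(m+1)}\le d_0(x,\partial M)<e^{-m}\}$, cover each shell by an optimal family of $N_b$ balls of regular radius $\varrho_b$, and subdivide each such ball via the bi-Lipschitz chart $\exp_x$ by a uniform Euclidean grid of side length
\[
 r_k(x) = 2^{-k}\min\bigl\{\rho_b(x),\; d_0(x,\partial M)^{a/\alpha}\bigr\}.
\]
The exponent $a/\alpha$ is dictated by (A): the H\"older estimate therein forces $|\|D_yf\|-\|D_xf\||=O(2^{-k\alpha})$ uniformly within one cell. A standard counting argument, combined with Shannon's inequality and the integrability of $\log N_b$, $|\log\rho_b|$, $|\log d_0(\cdot,\partial M)|$ from (B), yields $H_\mu(\xi_k)<\infty$. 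Since $r_k\downarrow 0$ a.e., the sequence $\xi_k$ is generating mod $\mu$, so by Kolmogorov--Sinai, $\lim_k h_\mu(f^n,\xi_k)=h_\mu(f^n)=n\,h_\mu(f)$ for every $n\ge 1$.

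Applying Ma\~n\'e's inequality to $f^n$ and $\xi_k$ yields $n\,h_\mu(f)\le\limsup_k\int\log T_n(x,\xi_k)\,d\mu$, where $T_n(x,\xi_k)=\#\{Q\in\xi_k:\,f^n(\xi_k(x))\cap Q\ne\emptyset\}$. In the exponential chart at $f^n(x)$, the image $f^n(\xi_k(x))$ is contained in an ellipsoid of semi-axes $\sigma_i(D_xf^n)\cdot r_k(x)$ up to a distortion obtained by iterating (A) along the orbit $x,\,fx,\,\ldots,\,f^{n-1}x$. Counting $\xi_k$-cells inside the ellipsoid and telescoping the ratio $r_k(x)/r_k(f^nx)$ by $\mu$-invariance gives
\[
 T_n(x,\xi_k)\le C_n(x)\prod_{\sigma_i(D_xf^n)>1}\sigma_i(D_xf^n),
\]
where $\log C_n$ is a Birkhoff-type ergodic sum of integrands already in $L^1(\mu)$ by (B), so that $n^{-1}\int\log C_n\,d\mu\to 0$. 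Dividing by $n$, using Oseledets' theorem for $n^{-1}\log\sigma_i(D_xf^n)\to\lambda_i(f,x)$ $\mu$-a.e., and invoking dominated convergence with dominator furnished by Kingman's subadditive theorem applied to $\log^+\|Df\|\in L^1(\mu)$, we arrive at $h_\mu(f)\le\int\sum_{\lambda_i(f,x)>0}\lambda_i(f,x)\,d\mu$.

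The principal obstacle is the local linearization of $f^n$ over a single cell of $\xi_k$: iterating the scalar H\"older bound (A) under the chain rule produces a cumulative distortion factor that, in raw form, grows exponentially in $n$ along with every $\|D f(f^ix)\|$ and $d_0(f^ix,\partial M)^{-a}$ encountered. The conditions (B) are calibrated precisely to tame this: once the cumulative factor is moved inside the logarithm of $T_n$, it becomes a Birkhoff sum of the integrands appearing in (B), whose ergodic average is finite and whose contribution after division by $n$ is $o(1)$. A secondary technical point is that cells near $\partial M$ have scale $d_0^{a/\alpha}$, which varies wildly along an orbit; this variation is absorbed by the telescoping $\log r_k(x)-\log r_k(f^nx)$ under $\mu$-invariance.
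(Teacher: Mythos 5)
Your overall strategy (a Ma\~n\'e-type counting argument over a countable, finite-entropy partition adapted to the boundary and to infinity, followed by dividing by $n$ and Oseledets) is indeed the strategy of the paper, but two of your key steps do not go through as stated. First, the claim that $f^n(\xi_k(x))$ is contained in a slightly inflated ellipsoid with semi-axes $\sigma_i(D_xf^n)\cdot r_k(x)$ cannot be extracted from hypothesis (A): condition (A) controls only the difference of operator \emph{norms} $\big{|}\|D_xf\|-\|D_yf\|\big{|}$, not the operator difference $\|D_xf-D_yf\|$ (the paper's remark stresses exactly this choice), and two derivatives with equal norms can differ by an arbitrary rotation. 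What (A) yields is only a diameter bound $\operatorname{diam}\big(f(\xi_k(x))\big)\lesssim \|D_xf\|\cdot r_k(x)$, and iterating diameter bounds alone gives a cell count of order $\big(\prod_j\|D_{f^jx}f\|\big)^{\dim M}$, i.e.\ $\dim M\cdot\lambda_1^+$ rather than $\sum_i\lambda_i^+$ after taking logarithms. The paper obtains the anisotropic count differently: it applies Ma\~n\'e's box-counting lemma to the \emph{linear} map $D_xg$ (giving $c\,\|(D_xg)^{\wedge}\|=c\prod_{\sigma_i>1}\sigma_i$) and then shows only that the nonlinear image of the preimage of each grid cell has diameter comparable to the grid size, which is a strictly weaker requirement than your ellipsoid containment. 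You need either to strengthen (A) to a genuine $C^{1+\alpha}$-type bound on $\|D_xf-D_yf\|$ (changing the theorem) or to replace the ellipsoid step by the linear-map counting argument.

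Second, your cells are sized by the position of $x$ alone, $r_k(x)=2^{-k}\min\{\rho_b(x),d_0(x,\partial M)^{a/\alpha}\}$, whereas iterating (A) along $x,fx,\dots,f^{n-1}x$ requires $f^j(\xi_k(x))\subset B\big(f^jx,\,d^a(f^jx,\partial M)\big)$ for \emph{every} $0\le j<n$. For any fixed $k$ there is a positive-measure set of points whose orbit segment approaches $\partial M$ or infinity too fast for this to hold, so on that set your bound $T_n\le C_n(x)\prod_{\sigma_i>1}\sigma_i$ is simply unavailable, and its contribution to $\int\log T_n\,d\mu$ is not addressed by your $o(1)$ Birkhoff-sum argument (which presupposes the iteration closes everywhere). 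This is precisely why the paper defines the sets $A_n$ by Birkhoff \emph{products} $\prod_{j=0}^{m-1}\|D_{f^jx}f\|^*$, $\prod_j d_*(f^jx,\partial M)$, $\prod_j\rho_b(f^jx)$, $\prod_j N_b(f^jx)$ over the whole length-$m$ orbit segment, sizes the cells on $A_s\setminus A_{s-1}$ accordingly, and then devotes separate estimates ($\I$, $\II_2$, $\II_{1,2}$) to the transitions between regularity levels, each bounded by an additive constant (or $O(m)$ with a harmless coefficient) that disappears only after dividing by $m$. A related smaller issue: the number of target cells met by $g(\xi_k(x))$ is governed by the grid scale throughout the \emph{image}, not by $r_k(f^nx)$ at the single point $f^n(x)$, so the telescoping $\log r_k(x)-\log r_k(f^nx)$ does not by itself control the count; the paper handles this by showing $g(B)\subset A_{s_B+C_1m}$ so that only boundedly many regularity levels, each with a uniform grid, can be hit.
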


The difficulties to establish (\ref{MR}) mainly  arise from four  aspects: firstly, the norm of derivarives may grow to the infinity;  secondly, the regular neighborhoods on which $f$ behaves like diffeomorphisms may be destroyed by the boundary; thirdly,   the distortion of manifold has no uniform  bounds;   and  fourthly,  the tankage may increase very largely when approaching the infinity and boundary.   To overcome these difficulties,  we  take advantage   of  partitions  which are adaptive   to  the above  items  and  also we show the finiteness of  such partitions' entropy in Section \ref{partition}    for the feasibility of later dynamical entropy computation.   In Section \ref{ evolutions}, we  carry on analysis of entropy for  evolutions, with respect to the  (singular) part near boundary and infinity, and the (regular)  part away from boundary and infinity.  In the present situation,  we adopt  infinite countable partition instead of finite partition as in \cite{KSLP}, which will make the  integrable condition  of Theorem \ref{MR Thm}  more directly applicable to the proof  of (\ref{MR}).

\section{Construction of  partitions and partition entropy}\label{partition}

For convenience, we consider the minimum between $d_0$ and $1$, which we  denote by $d_*$. Notice that the properties in (A)(B) concerning $d_0$ also hold for $d_*$.

Consider iterations $f^m$ with  $m\in \mathbb{N}$.  Associated to the influences  of noncompactness and boundary,   we define for any $n\in \mathbb{N}$, 
\begin{eqnarray*} A_{n}=\Big{\{}x\in U: &&\Pi_{j=0}^{m-1}\|D_{f^j(x)}f\|^*\le 2^{n}, \,\,\, \Pi_{j=0}^{m-1}d_*(f^j(x),\partial M)\ge 2^{-n},\\[2mm] &&\Pi_{j=0}^{m-1}\rho_b(f^j(x))\ge 2^{-n},\,\,\,\Pi_{j=0}^{m-1}N_b(f^j(x))\le 2^{n}  \Big{\}},\end{eqnarray*}
where $t^*=\max\{t,\,1 \}$ for $t\in \mathbb{R}$. 

It holds that $A_1\subset A_2\subset \cdots$ and $\lim_{n\to +\infty}\mu(A_n)=1$ by (B).  We are going to  construct  partitions with  box-like elements whose  sizes are   given differently   according to    $A_n$. 

Denote $\dm=\dim M$.   Relative to any $x\in M\setminus \partial M$,  under  an orthonormal basis  $\{e_1,\cdots, e_{\dm}\}$ of $T_xM$,  a box   centered at $y$ with sides $2a_i>0$ $(1\le i\le \dm)$  is understood as    $$\exp_x(\Gamma(\exp^{-1}_x(y); a_1,\cdots, a_{\dm})):=\exp_x(\{\exp^{-1}_x(y)+\sum_{1\le i\le \dm} t_ia_ie_i,\quad -1\le t_i\le 1\}).$$
   In particular, for any $a>0$, we denote   $\Gamma_x(a)$ as the cube $\exp_x(\Gamma(0; a,\cdots, a))$ and for any  integer $l\ge 1$,  $\mathcal{Y}(x,a,l)$
 is defined as a partition of  $\Gamma_x(a)$ consisting of subcubes $$\exp_x(\{ \sum_{1\le i\le \dm} \frac{k_ia}{2^l}e_i+\sum_{1\le i\le \dm} t_i\frac{a}{2^l}e_i:\,\, -1\le t_i\le 1\}),\,\,  1-2^l\le k_i\le 2^l-1,\,\,  \frac{k_i+1}{2}\in \mathbb{Z}.$$

For any $\vep>0$ and a subset $M_1\subset M$,  denote  $F(M_1,\vep)$ as a  subset  of  $M_1$ with  maximal cardinality in sense that  any $x,y\in M_1$ satisfy  $d(x,y)> \vep$.  As a matter of fact,   $M_1\subset \cup_{x\in F(M_1,\vep) }\bar{B}(x,\vep)$, where $\bar{B}(x,\vep)$ is  the closure of ball $B(x,\vep)$.   Noting that   $\exp_x^{-1}(B(x,\rho_b(x))= B_{\mathbb{R}^{\dm}}(0, \rho_b(x))$,   by the control in  regular scale  there  exists a constant $C_{0,1}>0$ such that for any  $x\in M\setminus \partial M$,  $0<r\le \rho_b(x)$,  one  has $$\#F(B(x, r),\vep)\le  C_{0,1}( \lceil\frac{r}{\vep}\rceil)^{\dm },\quad \forall\,\vep>0.$$

         Choose  $l_1  \in \mathbb{N}$  such that
$$l_1>a,\quad \frac{1}{2^{n(l_1-2m)}}<\frac{1}{\sqrt{\dm}  2^{n+1}},\quad  C\big{(}\frac{1}{2^{n(l_1-2m)}}\big{)}^{\alpha} 2^{an}<2^{\frac{1}{m}}-1,\quad \forall n\in \mathbb{N},$$  
 and define    \begin{eqnarray*}\vep_{n}&=&\frac{1}{\sqrt{\dm} 2^{nl_1}},\quad \forall\,n\in \mathbb{N}.
\end{eqnarray*}

Fix $n\in \mathbb{N}$.  In order to cover $A_n$, we  take  $F(A_n, \vep_{n})=\{x_1^{(n)},\cdots, x_{k_n}^{(n)}\}$ with $k_{n}=\#F(A_n, \vep_{n})$. It holds that   $\cup_{1\le i\le k_n}\Gamma_{x_i^{(n)}}(\vep_n)\supset A_n$    since  $\Gamma_{x_i^{(n)}}(\vep_n) \supset \bar{B}(x_i^{(n)}, \vep_n )$.  Moreover,    $$\Gamma_{x_i^{(n)}}(\vep_n)\subset \bar{B}(x_i^{(n)}, \,\sqrt{\dm} \vep_n)=\bar{B}(x_i^{(n)},\, \frac{1}{2^{nl_1}}),\quad  \forall\,1\le i\le k_n.$$
We point out that the closure of any ball $B$ centered at $x\in A_n$ with radius $\le \sqrt{\dm}\vep_n$ intersects  at most   $(4\lceil b\dm\rceil )^{\dm}$ such  $\Gamma_{x_i^{(n)}}(\vep_n)$.  Indeed, if not, then the number of   $x_i^{(n)}$  in $\bar{B}(x, 2\sqrt{\dm}\vep_n)\,\subset \Gamma_{x}(2\sqrt{\dm}\vep_n)$
 $(\subset \bar{B}(x,2\dm\vep_n)\subset B(x,\frac{1}{2^n})\subset  B(x,\rho_b(x)) )$  is more than $(4\lceil b\dm\rceil )^{\dm}$,   which leads to  at least two of them staying in the same element of  the equi-partition of  $\Gamma_{x}(2\sqrt{\dm}\vep_n)$ by cubes with sides  $\frac{2\cdot 2\sqrt{\dm}\vep_n}{4\lceil b\dm\rceil}$,   thus  their  distance on $M$ not bigger than  $ \frac{2\cdot 2\sqrt{\dm}\vep_n}{4\lceil b\dm\rceil}\sqrt{\dm}b\le\vep_n$,  that  contradicts the choice of  $x_i^{(n)}$.

To obtain a partition of $A_n$, we define 
\begin{eqnarray*}B_{n;1}&=&A_n\cap \Gamma_{x_1^{(n)}}(\vep_n),\\[2mm]
	B_{n;2}&=& A_n\cap \Gamma_{x_2^{(n)}}(\vep_n)\setminus B_{n;1},\\[2mm]
		B_{n; 3}&=& A_n\cap \Gamma_{x_3^{(n)}}(\vep_n)\setminus (\cup_{1\le i\le 2}B_{n;i}),\\[2mm]
	& \vdots& \\[2mm]
	B_{n; k_{n}}&=& A_n\cap \Gamma_{x_{k_n}^{(n)}}(\vep_n)\setminus (\cup_{1\le i\le k_n-1}B_{n;i}).
\end{eqnarray*} 
 For $s\ge n+1$,  taking  $F(A_s\setminus A_{s-1}, \vep_{s})=\{x_1^{(s)},\cdots, x_{k_s}^{(s)}\}$ with $k_{s}=\#F(A_s\setminus A_{s-1}, \vep_{s})$,  define 
\begin{eqnarray*}B_{s;1}&=&(A_s\setminus A_{s-1})\cap \Gamma_{x_1^{(s)}}(\vep_s),\\[2mm]
	B_{s;2}&=&(A_s\setminus A_{s-1})\cap \Gamma_{x_2^{(s)}}(\vep_s) \setminus B_{s;1},\\[2mm]
	& \vdots& \\[2mm]
	B_{s; k_{s}}&=&  (A_s\setminus A_{s-1})\cap \Gamma_{x_{k_s}^{(s)}}(\vep_s)\setminus (\cup_{1\le i\le k_s-1}B_{s;i}).
\end{eqnarray*}
Then  we may get  two measurable partitions
\begin{eqnarray*} \mathcal{P}&=&\Big{\{}B_{s, i}:\,\,1\le i\le k_s,\,s\ge n\Big{\}}, \\[2mm] \mathcal{\tilde{P}}&=&\Big{\{}  E_s=\cup_{1\le i\le k_s}B_{s, i}:\,s\ge n\Big{\}}.\end{eqnarray*}
In fact, one may see that   $E_n=A_n$ and $E_{s}=A_s\setminus A_{s-1}$ for $s\ge n+1 $.

We continue  by using $l$ to partition each $B_{s;i}$ into smaller sets.   To be precise,  denote  $\mathcal{Y}(x_i^{(s)},\vep_s,l)=\{\Gamma_1^{(s,i,l)}, \Gamma_2^{(s,i,l)},\cdots, \Gamma_{2^{l\dm }}^{(s,i,l)}\}$ and  let 
\begin{eqnarray*}B_{s;i,1}&=&B_{s; i}\cap \Gamma_1^{(s,i,l)},\,\\[2mm]
	B_{s;i,2}&=&B_{s; i }\cap \Gamma_2^{(s,i,l)}\setminus B_{s;i,1},\\[2mm]
	& \vdots& \\[2mm]
	B_{s;i, 2^{l\dm}}&=& B_{s; i}\cap \Gamma_{2^{l\dm}}^{(s,i,l)} \setminus (\cup_{1\le j\le 2^{l\dm }-1}B_{s;i, j}).
\end{eqnarray*}  
Then,  for any $l\ge 1$,   one   can obtain  a partition  as the following
\begin{eqnarray*} \mathcal{P}_n^{(l)}=&&\Big{\{} B_{s; i,j}:\,\,1\le i\le k_s,\,1\le j\le2^{l\dm},\,\,s\ge n\Big{\}}.\end{eqnarray*}
It is obvious that $ \mathcal{\tilde{P}} \prec \mathcal{P}\prec \mathcal{P}_n^{(l)}\,\mod 0\,$ for any $l\ge 1$.   For convenience of statement, we denote $\mathcal{P}_n^{(0)}= \mathcal{P}$.

In the  above construction, $n$ stands for the regularity relative to  the bounday and infinity; $l$ is used to make the size of partitions arbitrarily small. The   choice of $l$ will be confirmed   in the estimate of  entropy with respect to  evolutions.

Note that  $A_s$ is covered by at most $2^s$ balls with regular size (which is $\le 1$) and    each of them can be  further covered by at most $C_{0,1}(\lceil \frac{4}{\vep_s}\rceil)^{\dm} $ balls  with radius $\frac{\vep_s}{4}$ (each such ball is   contained in a ball centered at $A_s$ with radius $\frac{\vep_s}{2}$),   thus  $k_s\le 2^s\cdot C_{0,1}(\lceil \frac{4}{\vep_s}\rceil)^{\dm}  $.   Moreover,  each element of $\mathcal{P}_n^{(0)}$ is partitioned by $2^{l\dm }$ cubes  in the constructoin of $\mathcal{P}_n^{(l)}$.  Thus,  for any $s\ge n$, 
\begin{eqnarray*}&&\#\Big{\{} B_{s; i,j}:\,\,1\le i\le k_s,\,1\le j\le 2^{l\dm}\Big{\}}\le2^s \cdot   C_{0,1}(\lceil \frac{4}{\vep_s}\rceil)^{\dm} \cdot 2^{l\dm }\\[2mm] &&=  C_{0,1} (\lceil 4\sqrt{\dm}\rceil )^{\dm}2^{s(1+l_1\dm)+l\dm}:=C_02^{s(1+l_1\dm)+l\dm}.\end{eqnarray*}

   Before going any  further,   we  first show the finiteness of partition entropy $H_{\mu}(\mathcal{P}_n^{(l)}) $ so that the constructed partitions are feasible in the computation of  dynamical entropy. 
\begin{lemma}\label{finite}
$H_{\mu}(\mathcal{P}_n^{(l)})<\infty$ for any integer $l\ge 0$.
\end{lemma}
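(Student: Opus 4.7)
The plan is to split the entropy of $\mathcal{P}_n^{(l)}$ along the coarser partition $\tilde{\mathcal{P}}=\{E_s\}_{s\ge n}$ via the standard identity
\begin{equation*}
H_\mu(\mathcal{P}_n^{(l)}) = H_\mu(\tilde{\mathcal{P}}) + H_\mu(\mathcal{P}_n^{(l)}\,\vert\,\tilde{\mathcal{P}}),
\end{equation*}
and to control each piece using (B), the $f$-invariance of $\mu$, and the cardinality bound $\#\{B_{s;i,j}\}\le C_0 2^{s(1+l_1\dm)+l\dm}$ established in the paragraph preceding the lemma.

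The first step is to quantify how fast $\mu(E_s)$ decays. By construction, for $s\ge n+1$, $E_s\subseteq A_s\setminus A_{s-1}$, so on $E_s$ at least one of the four products defining $A_{s-1}$ violates the threshold $2^{s-1}$. Consequently, setting
\begin{equation*}
g(x)=\sum_{j=0}^{m-1}\Bigl(\log^+\!\|D_{f^j x}f\|+|\log d_*(f^j x,\partial M)|+|\log\rho_b(f^j x)|+\log N_b(f^j x)\Bigr),
\end{equation*}
one has $g\ge (s-1)\log 2$ on $E_s$. By $f$-invariance of $\mu$ and (B), $\int g\,d\mu\le 4m\cdot C_{(B)}<\infty$, whence
\begin{equation*}
\sum_{s\ge n+1}(s-1)(\log 2)\,\mu(E_s)\le\int g\,d\mu<\infty,
\end{equation*}
so in particular $\sum_s s\,\mu(E_s)<\infty$.

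The second step bounds $H_\mu(\tilde{\mathcal{P}})=-\sum_s\mu(E_s)\log\mu(E_s)$. Using the elementary inequality $-p\log p\le 2sp+2se^{-2s}$ valid for $p\in(0,1)$ (separating the cases $p\ge e^{-2s}$ and $p< e^{-2s}$ and noting monotonicity of $-p\log p$ on $(0,e^{-1})$), summability of $s\,\mu(E_s)$ from the first step and summability of $se^{-2s}$ gives $H_\mu(\tilde{\mathcal{P}})<\infty$.

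The third step estimates the conditional entropy. On each atom $E_s$, the partition $\mathcal{P}_n^{(l)}$ has at most $C_0 2^{s(1+l_1\dm)+l\dm}$ cells, so
\begin{equation*}
H_\mu\bigl(\mathcal{P}_n^{(l)}\,\vert\,\tilde{\mathcal{P}}\bigr)\le\sum_{s\ge n}\mu(E_s)\bigl(\log C_0+(s(1+l_1\dm)+l\dm)\log 2\bigr),
\end{equation*}
which is again finite thanks to $\sum_s s\,\mu(E_s)<\infty$. Combining the two bounds yields $H_\mu(\mathcal{P}_n^{(l)})<\infty$. The main obstacle is the first step: one has to convert the four separate integrability hypotheses in (B) into a single pointwise lower bound on $g$ over $E_s$, and this is exactly where the Birkhoff-type use of $f$-invariance together with the symmetric definition of $A_n$ in terms of all four quantities simultaneously pays off.
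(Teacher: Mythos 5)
Your proposal is correct and follows essentially the same route as the paper: both arguments convert the integrability hypothesis (B), via $f$-invariance and the fact that on $E_s$ ($s\ge n+1$) at least one of the four defining products crosses the threshold $2^{s-1}$, into the summability $\sum_s s\,\mu(E_s)<\infty$, and then combine this with the per-block cardinality bound $C_0 2^{s(1+l_1\dm)+l\dm}$. Your Step 2, which bounds $H_\mu(\tilde{\mathcal{P}})$ by the elementary inequality $-p\log p\le 2sp+2se^{-2s}$, is in fact slightly more careful than the paper's final display, which tacitly drops the term $H_\mu(\tilde{\mathcal{P}})$ from the decomposition $H_\mu(\mathcal{P}_n^{(l)})=H_\mu(\tilde{\mathcal{P}})+H_\mu(\mathcal{P}_n^{(l)}\mid\tilde{\mathcal{P}})$.
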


\begin{proof}

Denote \begin{eqnarray*}p(x)=\max\Big{\{} &&\Pi_{j=0}^{m-1}\|D_{f^j(x)}f\|^*,\,\, \Pi_{j=0}^{m-1}d_*^{-1}(f^j(x),\partial M),\\[2mm] &&\,\,\Pi_{j=0}^{m-1}\rho^{-1}_b(f^i(x)), \,\,\Pi_{j=0}^{m-1}N_b(f^j(x))\quad \Big{\}}.\end{eqnarray*}  Note that  if $x\in B_{s; i,j }$ with $s\ge n+1$, then $x\notin A_{s-1}$, so   $p(x)> 2^{s-1}$. It follows that
\begin{eqnarray}
		\int \log p(x)d\mu(x)
	&=&\sum_{s=n}^{+\infty}\int_{E_s} \log p(x)d\mu(x)\nonumber \\[2mm]
\label{E_n bound}	&\ge&  \int_{E_n} \log p(x)d\mu(x)	+\sum_{s=n+1}^{+\infty}\int_{E_s} \log 2^{s-1} d\mu(x).
\end{eqnarray}
Together with  (B), we  deduce 
$$\sum_{s=n}^{+\infty}\mu(E_s)s<+\infty.$$
Therefore,   \begin{eqnarray*}H_{\mu}(\mathcal{P}_n^{(l)}) &\le& \sum_{s=n}^{+\infty} \mu(E_s)\log (C_0 2^{s(1 +l_1\dm)+l\dm})<+\infty. \end{eqnarray*}
\end{proof}

 \section{Estimates on  entropy for dynamical evolutions}\label{ evolutions}

This section is devoted to  the estimate of  metric entropy of $g$ by using the constructed partitions $\mathcal{P}_n^{(l)}$.  By Lemma \ref{finite}, the entropy of  $\mathcal{P}_n^{(l)}$ is finite,  thus 
\begin{eqnarray*}
	h_{\mu}(g,\mathcal{P}_n^{(l)})&=&\lim_{t\to+\infty}\frac{1}{t}H_{\mu}(\bigvee_{i=0}^{t-1}g^{-i}(\mathcal{P}_n^{(l)}))\\[2mm]
	&=& \lim_{t\to+\infty}\frac{1}{t}\Big{(}H_{\mu}(\mathcal{P}_n^{(l)})+\sum_{i=1}^{t-1}H_{\mu}(g^{-i}(\mathcal{P}_n^{(l)})\mid \bigvee_{j=0}^{i-1}g^{-j}(\mathcal{P}_n^{(l)}))\Big{)}\\[2mm]
	&=& \lim_{t\to+\infty}\frac{1}{t}\Big{(}\sum_{i=1}^{t-1}H_{\mu}(g^{-i}(\mathcal{P}_n^{(l)})\mid \bigvee_{j=0}^{i-1}g^{-j}(\mathcal{P}_n^{(l)}))\Big{)}\\[2mm]
	&\le& H_{\mu}(g^{-1}(\mathcal{P}_n^{(l)})\mid \mathcal{P}_n^{(l)}).
\end{eqnarray*}
Furthermore,  it holds that 
 \begin{eqnarray*}
 H_{\mu}(g^{-1}(\mathcal{P}_n^{(l)})\mid \mathcal{P}_n^{(l)})&=& H_{\mu}(g^{-1}( \mathcal{P}_n^{(l)})\vee  g^{-1}(\mathcal{\tilde{P}}) \mid \mathcal{P}_n^{(l)}) \\[2mm]
&=& H_{\mu}(g^{-1}(\mathcal{\tilde{P}}) \mid \mathcal{P}_n^{(l)})+ H_{\mu}(g^{-1}(\mathcal{P}_n^{(l)}) \mid \mathcal{P}_n^{(l)}\vee g^{-1}(\mathcal{\tilde{P}}))\\[2mm]
&:=& \I+\II.
\end{eqnarray*}

 {\bf (1) Estimate on $\I$}.
Observe that by Lemma \ref{finite},  \begin{eqnarray}\label{I finite}\I\le H_{\mu}(g^{-1}(\mathcal{\tilde{P}}))=H_{\mu}(\mathcal{\tilde{P}})\le H_{\mu}(\mathcal{P}_n^{(0)})<+\infty.\end{eqnarray}
Moreover,   $\mu( E_n)\to 1$ as $n\to\infty$,  thus,  by (\ref{E_n bound}) and the integrability of $\log p(x)$, 
$$\lim_{n\to \infty}\sum_{s=n+1}^{+\infty}\int_{E_s} \log 2^{s-1} d\mu(x)=0.$$
So,  taking  $n$ large enough,   one can make  
$$\sum_{s=n+1 }^{+\infty}\mu(E_s)\log 2^{sl_1\dm}<1.$$
Although we can obtain $\I$ is finite by (\ref{I finite}),   the partition entropy $H_{\mu}(\mathcal{P}_n^{(0)})$ depending  on $g=f^m$   may be very large as $m$ increases.    For the estimate of dynamical entropy,   by taking $l$ large,  we are going to deduce a controllable linear  bound for $\I$  relative to the increase  of  $m$.   

 Notice that  \begin{eqnarray*}
	\I&=&\sum_{B\in \mathcal{P}_n^{(l)}}\sum_{s=n}^{+\infty} -\mu(g^{-1}(E_s)\cap B)\log \frac{\mu(g^{-1}(E_s)\cap B)}{\mu(B)}\\[2mm]
&=&\int\big[\sum_{s=n}^{+\infty}-\frac{\mu(g^{-1}(E_s)\cap \mathcal{P}_n^{(l)}(x))}{\mu(\mathcal{P}_n^{(l)}(x))}\log\frac{\mu(g^{-1}(E_s)\cap \mathcal{P}_n^{(l)}(x))}{\mu(\mathcal{P}_n^{(l)}(x))}\big]d\mu(x)\\[2mm]
&=&\sum_{s=n}^{+\infty}\int\big[-\frac{\mu(g^{-1}(E_s)\cap \mathcal{P}_n^{(l)}(x))}{\mu(\mathcal{P}_n^{(l)}(x))}\log\frac{\mu(g^{-1}(E_s)\cap \mathcal{P}_n^{(l)}(x))}{\mu(\mathcal{P}_n^{(l)}(x))}\big]d\mu(x).
\end{eqnarray*}
Since $\I\mid_{l=0}\,<+\infty$, there exists $L\ge n$ such that
\begin{eqnarray*}\sum_{s=L+1}^{+\infty}\int\big[-\frac{\mu(g^{-1}(E_s)\cap \mathcal{P}_n^{(0)}(x))}{\mu(\mathcal{P}_n^{(0)}(x))}\log\frac{\mu(g^{-1}(E_s)\cap \mathcal{P}_n^{(0)}(x))}{\mu(\mathcal{P}_n^{(0)}(x))}\big]d\mu(x)<1.
\end{eqnarray*}
Recalling that  $\mathcal{P}_n^{(0)}\prec  \mathcal{P}_n^{(l)}$ for every $l\ge 1$, by the convexity of $-x\log x$, we have
\begin{eqnarray*}
	&&\int\big[\sum_{s=L+1}^{+\infty}-\frac{\mu(g^{-1}(E_s)\cap \mathcal{P}_n^{(l)}(x))}{\mu(\mathcal{P}_n^{( l)}(x))}\log\frac{\mu(g^{-1}(E)\cap \mathcal{P}_n^{(l)}(x))}{\mu(\mathcal{P}_n^{(l)}(x))}\big]d\mu(x)\\[2mm]
	&=&\sum_{s=L+1}^{+\infty}\int\big[-\frac{\mu(g^{-1}(E_s)\cap \mathcal{P}_n^{(l)}(x))}{\mu(\mathcal{P}_n^{(l)}(x))}\log\frac{\mu(g^{-1}(E_s)\cap \mathcal{P}_n^{(l)}(x))}{\mu(\mathcal{P}_n^{(l)}(x))}\big]d\mu(x)\\[2mm]
	&\le & \sum_{s=L+1}^{+\infty}\int\big[-\frac{\mu(g^{-1}(E_s)\cap \mathcal{P}_n^{(0)}(x))}{\mu(\mathcal{P}_n^{(0)}(x))}\log\frac{\mu(g^{-1}(E_s)\cap \mathcal{P}_n^{(0)}(x))}{\mu(\mathcal{P}_n^{(0)}(x))}\big]d\mu(x)\\[2mm]
	&<& 1.
\end{eqnarray*}

For  the expression of $\I$,   the term $ \mathcal{P}_n^{(l)}$ stays below in   the form of  conditional entropy.  In order to make $\I$ small,   we  can  let $\mathcal{P}_n^{(l)}$  much finer, i.e., let $l$  large.  To be precise, we let $l$ satisfy
\begin{eqnarray*}
 \frac{1}{2^{l-2-m(L+1)}}<\frac{\min\{1-2^{-\frac{1}{m}},\,2^{-1-\frac{1}{m}}\}}{2^{L}},\quad\quad
\big{(}\frac{1}{2^{l-2-m(L+1)}}\big{)}^{\alpha}C2^{aL}<2^{\frac{1}{m}}-1.
\end{eqnarray*}

For  $B=B_{k;i,j}$  contained in  some  $E_k$,  denote $s_B=\min\{s: g(B)\cap E_s\neq \emptyset\}$.  We split  the discussion into two cases:

\smallskip

(i) $s_B>L$. Then
\begin{eqnarray*}
\sum_{s=n}^{+\infty} -\mu(g^{-1}(E_s)\cap B)\log \frac{\mu(g^{-1}(E_s)\cap B)}{\mu(B)}=\sum_{s=L+1}^{+\infty} -\mu(g^{-1}(E_s)\cap B)\log \frac{\mu(g^{-1}(E_s)\cap B)}{\mu(B)}.
\end{eqnarray*}
So,
\begin{eqnarray*}
	&&\sum_{s_B>L}\sum_{s=n}^{+\infty} -\mu(g^{-1}(E_s)\cap B)\log \frac{\mu(g^{-1}(E_s)\cap B)}{\mu(B)}\\[2mm] &\le&  \int\big[\sum_{s=L+1}^{+\infty}-\frac{\mu(g^{-1}(E_s)\cap \mathcal{P}_n^{(l)}(x))}{\mu(\mathcal{P}_n^{(l)}(x))}\log\frac{\mu(g^{-1}(E_s)\cap \mathcal{P}_n^{(l)}(x))}{\mu(\mathcal{P}_n^{(l)}(x))}\big]d\mu(x)<1.
\end{eqnarray*}

\smallskip

(ii) $s_B\le L$. 

\begin{lemma} \label{g(B) including}  There exists a constant integer  $C_1>0$ such that  $g(B)\subset A_{s_B+C_1m}$.

\end{lemma}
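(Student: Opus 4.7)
\emph{Proof sketch.} The plan is to show that $g(B)$ sits in a small neighborhood of a reference point $y''=g(y')\in g(B)\cap E_{s_B}$, and then propagate this closeness along the next $m$ iterates so that each of the four product quantities ($\|Df\|^*$, $d_*$, $\rho_b$, $N_b$) defining $A_{s_B+C_1m}$ can be compared, factor by factor, with those at $y''\in A_{s_B}$.

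First I would estimate $\diam g(B)$. Since $B\subset E_k\subset A_k$, one has $\prod_{j=0}^{m-1}\|D_{f^j(y)}f\|^*\le 2^k$ for every orbit starting in $B$; since $\diam B\le 2/2^{kl_1+l}$ by the construction of $\mathcal{P}_n^{(l)}$; and since the distortion bound in (A), tamed by the choice of $l_1$, forces per-step multiplicative distortion to be at most $2^{1/m}$ and hence the total over $m$ steps to be at most $2$, one obtains $\diam g(B)\le 4/2^{k(l_1-1)+l}$. By the choice of $l$ depending on $L$, this is much smaller than $2^{-s_B}$. Now for any $z'=g(z)\in g(B)$ we have $d(z',y'')\le\diam g(B)$, and a second induction in $j\in\{0,\dots,m-1\}$ using (A) together with $\|D_{f^j(y'')}f\|^*\le 2^{s_B}$ and $d_0(f^j(y''),\partial M)\ge 2^{-s_B}$ (from $y''\in A_{s_B}$) yields $d(f^j(z'),f^j(y''))\le 2\cdot 2^{s_B}\diam g(B)$, still as small as we wish.

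This orbit closeness is then converted into the four required bounds: (A) gives $\|D_{f^j(z')}f\|^*\le 2\|D_{f^j(y'')}f\|^*$, so $\prod_j\|D_{f^j(z')}f\|^*\le 2^{s_B+m}$; the triangle inequality gives $d_*(f^j(z'),\partial M)\ge\tfrac12 d_*(f^j(y''),\partial M)$, so the corresponding product is at least $2^{-(s_B+m)}$; and since $\rho_b$ and $N_b$ depend on $x$ only through $d_0(x,\partial M)$ in a monotone fashion, while $d_0(f^j(z'),\partial M)\ge\tfrac12 d_0(f^j(y''),\partial M)\ge 2^{-s_B-1}$, the a priori bound $s_B\le L$ in case (ii) confines $d_0$ to a compact range on which $\rho_b$ and $N_b$ are uniformly bounded. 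Letting $C_1$ absorb these losses (a constant depending on $L,m$ and the geometric data, but uniform in $B$) one deduces $z'\in A_{s_B+C_1m}$, hence $g(B)\subset A_{s_B+C_1m}$.

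The main obstacle is the last step: $\rho_b$ and $N_b$, unlike $\|Df\|$ and $d_*$, are not pointwise quantities but infima and cover-numbers over entire superlevel sets of $d_0(\cdot,\partial M)$, so they can in principle drop sharply under a tiny decrease in $d_0$. The rescue is the a priori bound $s_B\le L$ available in case (ii), which confines $d_0(f^j(z'),\partial M)$ to a regime where $\rho_b$ and $N_b$ are controlled by $L$-dependent constants.
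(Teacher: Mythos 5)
Your overall scheme (bound $\diam g(B)$, propagate closeness to a reference orbit in $E_{s_B}$ over the next $m$ iterates, then compare the four defining products factor by factor) is the same as the paper's, and your treatment of the $\|Df\|^*$ and $d_*$ factors matches. The gap is in the last step, precisely where you flag the obstacle. You handle $\rho_b$ and $N_b$ by noting $d_0(f^j(z'),\partial M)\ge 2^{-L-1}$ and invoking uniform bounds for $\rho_b,N_b$ on the resulting range of $d_0$; this yields at best $\prod_j\rho_b(f^j(z'))\ge r_L^m$ and $\prod_j N_b(f^j(z'))\le N_L^m$ with $r_L,N_L$ depending on $L$, hence a constant $C_1=C_1(L)$. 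But $L$ is chosen after $n$ and $g=f^m$ (it is where the tail of $\I\mid_{l=0}$ drops below $1$), so your $C_1$ depends on $n$ and $m$ in an uncontrolled way. The lemma is only useful downstream if $C_1$ is absolute: case (ii) gives $\I\le 1+(C_1m+1)e^{-1}$, the bound on $h_\mu(f^m,\mathcal{P}_n^{(l)})$ must be uniform as the partition is refined, and the final step divides by $m$ and lets $m\to\infty$, which requires $C_1$ independent of $m$. With $C_1=C_1(L(n,m))$ the term $(C_1m+1)e^{-1}/m$ need not stay bounded and the Margulis--Ruelle inequality is lost. (Also, ``uniformly bounded on a compact range'' presupposes compactness of superlevel sets of $d_0$ and some continuity of $\rho_b,N_b$ in the level, neither of which is part of the hypotheses.)

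The paper avoids this by comparing the two superlevel sets of $d_0$ directly rather than through crude uniform bounds: every $z$ with $d_0(z,\partial M)\ge d_0(f^j(u),\partial M)$ admits $z'$ with $d_0(z',\partial M)\ge d_0(f^j(y),\partial M)$ and $d(z,z')\le \frac{1}{2^{l-2-j(L+1)}}\le(1-2^{-1/m})\rho_b(f^j(y))$. This gives the factorwise estimates $\rho_b(f^j(u))\ge 2^{-1/m}\rho_b(f^j(y))$ and, by covering each regular ball at the $y$-level by boundedly many balls of radius $\tfrac12\rho_b(f^j(u))\ge 2^{-1/m-1}\rho_b(f^j(y))$, $N_b(f^j(u))\le C_{1,0}N_b(f^j(y))$ with $C_{1,0}$ depending only on $b$ and $\dim M$. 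Multiplying over $j$ then loses only a factor $2$ for $\rho_b$ and $C_{1,0}^m$ for $N_b$, giving $C_1=\lceil\log_2 C_{1,0}\rceil$, an absolute constant. You would need to replace your compactness step by such a two-level comparison. A secondary, minor point: your intermediate bound $d(f^j(z'),f^j(y''))\le 2\cdot 2^{s_B}\diam g(B)$ should compound over the $j$ iterates (the paper obtains $\frac{1}{2^{l-2-j(L+1)}}$); this does not affect the conclusion since $l$ is chosen after $L$ and $m$.
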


\begin{proof}  Step 1:  $\diam(g(B))\le   \frac{1}{2^{l-2}}.$

	 By definition,   $B\subset \Gamma_j^{(k,i,l)}\subset \Gamma_{x_i^{(k)}}(\vep_k)\subset \bar{B}(x_i^{(k)}, \frac{1}{2^{kl_1}})$.  For any $y\in \bar{B}(x, \frac{1}{2^{kl_1}})$ with  $x\in A_k$, 
\begin{eqnarray*}\big{|}\|D_yf\|-\|D_xf\|\big{|}&\le&  C d(x,y)^{\alpha}d_*^{-a}(x,\partial M)\le  C(\frac{1}{2^{kl_1}})^{\alpha}2^{ak}<2^{\frac{1}{m}}-1,\\[2mm]
	d(f(x),f(y))&\le& d(x,y)\max_{u\in \bar{B}(x, \frac{1}{2^{kl_1}})}\|D_uf\| \le \frac{1}{2^{kl_1}} (2^k+1) \\[2mm] &\le& \frac{1}{2^{kl_1}} 2^{k+1}\le \frac{1}{2^{k(l_1-2)}}. \end{eqnarray*}
By induction,  suppose for $1\le j\le m-1$,  \begin{eqnarray*}&&\big{|}\|D_{f^{j-1}(y)}f\|-\|D_{f^{j-1}(x)}f\|\big{|}\le   C(\frac{1}{2^{k(l_1-2(j-1))}})^{\alpha}2^{ak}<2^{\frac{1}{m}}-1,\\[2mm]
	&&  d(f^{j}(x),f^{j}(y))\le  \frac{1}{2^{k(l_1-2j)}}. \end{eqnarray*}
Then 
one has 
\begin{eqnarray*}&&\big{|}\|D_{f^j(y)}f\|-\|D_{f^j(x)}f\|\big{|}\\[2mm]
	&&\le Cd(f^j(x),f^j(y))^{\alpha}d_*^{-a}(f^j(x),\partial M)\le  C(\frac{1}{2^{k(l_1-2j)}})^{\alpha}2^{ak}<2^{\frac{1}{m}}-1,\\[2mm]
	&&  d(f^{j+1}(x),f^{j+1}(y))\le \frac{1}{2^{k(l_1-2j)}}(2^k+1)\le \frac{1}{2^{k(l_1-2(j+1))}}. \end{eqnarray*}
Hence, 
$$\|D_yg\|\le \Pi_{j=0}^{m-1}(\|D_{f^j(x)}f\|+2^{\frac{1}{m}}-1)\le   \Pi_{j=0}^{m-1} (2^{\frac{1}{m}}\|D_{f^j(x)}f\|^*) \le 2^{k+1},$$
which gives rise to $$\diam(g(B))\le  \diam(B)\max_{y\in \bar{B}(x_i^{(k)}, \frac{1}{2^{kl_1}})}\|D_yg\|\le \frac{2}{2^{kl_1+l}}2^{k+1}\le \frac{1}{2^{l-2}}.$$

Step 2:  Choose  $y\in g(B)\cap E_{s_B}$,  then   $d(f^j(u),f^j(y))\le  \frac{1}{2^{l-2-j(L+1)}}$   for any  $u\in \bar{B}(y, \frac{1}{2^{l-2}} )$ and $0\le j\le m-1 $.

Note that when $d(u, y)\le \frac{1}{2^{l-2}} $, \begin{eqnarray*}\|D_{u}f\|&\le&  \|D_yf\|+d(u,y)^{\alpha}Cd(y,\partial M)^{-a}\\[2mm]
	&\le&  2^{s_B} +(\frac{1}{2^{l-2}})^{\alpha}C 2^{as_B}\\[2mm]
	&\le&   2^{s_B} +(\frac{1}{2^{l-2}})^{\alpha}C 2^{aL}\\[2mm]
	&\le& 2^{s_B}+1\le  2^{s_B+1}\le 2^{L+1}.
\end{eqnarray*}
By induction, suppose for $1\le j\le m-1$,   $d(f^{j-1}(u), f^{j-1}(y))\le \frac{1}{2^{l-2-(j-1)(L+1)}} $ and $\|D_{u_1}f\|\le 2^{L+1} $ for any $u_1$ with $d(u_1,  f^{j-1}(y))\le \frac{1}{2^{l-2-(j-1)(L+1)}}$, then

$$d(f^j(u),f^j(y))\le d(f^{j-1}(u), f^{j-1}(y)) 2^{L+1} \le  \frac{1}{2^{l-2-j(L+1)}}. $$
Moreover, for any $u_1$ with $d(u_1,  f^{j}(y))\le  \frac{1}{2^{l-2-j(L+1)}}$,
\begin{eqnarray*}\|D_{u_1}f\|&\le&  \|D_{f^j(y)}f\|+d(u_1, f^j(y))^{\alpha}Cd(f^j(y),\partial M)^{-a}\\[2mm]
	&\le&  2^{s_B} +(\frac{1}{2^{l-2-j(L+1)}})^{\alpha}C 2^{as_B}\\[2mm]
	&\le&   2^{s_B} +(\frac{1}{2^{l-2-j(L+1)}})^{\alpha}C 2^{aL}\\[2mm]
	&\le& 2^{s_B}+1\le  2^{L+1}.
\end{eqnarray*}

Step 3:  $u\in A_{s_B+C_1m}$  for some constant  integer $C_1>0$.  

We need check the properties of $d_*,  \|\,\|^*, \rho_b $ and $N_b$ for $u$.

$1^{\circ}$ $\Pi_{j=0}^{m-1}d_*(f^j(u),\partial M)\ge   2^{-s_B-1}$. 

 By the choice of $l$, for $0\le j\le m-1$,  \begin{eqnarray*} && d(f^j(u), \partial M)\ge d(f^j(y), \partial M)- d(f^j(u), f^j(y)) \ge 2^{-\frac{1}{m}}d(f^j(y), \partial M),\\[2mm]
&&d(f^j(u),x_0)\le d(f^j(y),x_0)+d(f^j(u), f^j(y))\le 2^{\frac{1}{m}}\max\{d(f^j(y),x_0), 1\},
\end{eqnarray*}
which implies that \begin{eqnarray*}\label{d} d_*(f^j(u), \partial M) \ge 2^{-\frac{1}{m}}d_*(f^j(y, \partial M).\end{eqnarray*}
So, $$\Pi_{j=0}^{m-1}d_*(f^ju),\partial M)\ge (2^{-\frac{1}{m}})^m\Pi_{i=0}^{m-1}d_*(f^j(z),\partial M) \ge   2^{-s_B-1}.$$

$2^{\circ}$ $\Pi_{j=0}^{m-1}\|D_{f^j(u)}f\|^*\le   2^{s_B+1}$. 

$$\big{|}\|D_{f^j(u)}f-\|D_{f^j(y)}f\|\big{|}\le  d(f^j(u), f^j(y))^{\alpha}Cd(f^j(y),\partial M)^{-a}\le  2^{\frac{1}{m}}-1,$$
which gives
\begin{eqnarray*}\label{Df} \|D_{f^j(u)}f\|^*\le 2^{\frac{1}{m}}\|D_{f^j(y)}f\|^*.\end{eqnarray*} 
So,  $$\Pi_{j=0}^{m-1}\|D_{f^i(u)}f\|^*\le   (2^{\frac{1}{m}})^m\Pi_{j=0}^{m-1}\|D_{f^j(z)}f\|^* \le 2^{s_B+1}.$$

$3^{\circ}$ $\Pi_{j=0}^{m-1}\rho_b(f^j(u))\ge   2^{-s_B-1}$.  

For any $z$ satisfying $d_0(z, \partial M)\ge  d_0(f^j(u), \partial M)$,  there exists $z'\in \bar{B}(z, \frac{1}{2^{l-2-j(L+1)}})$  satisfying  $d_0(z', \partial M)\ge d_0(f^j(y), \partial M)$.  Since   \begin{eqnarray*}
	&&d(z,z')\le \frac{1}{2^{l-2-j(L+1)}}\le (1-2^{-\frac{1}{m}})\rho_b(f^j(y)),
\end{eqnarray*}
so,    $$\varrho_b(z)\ge 2^{-\frac{1}{m}}\rho_b(f^j(y)).$$
By the arbitrariness of $z$, we have  \begin{eqnarray*}\label{rho}\rho_b(f^j(u))\ge 2^{-\frac{1}{m}}\rho_b(f^j(y)).\end{eqnarray*}
So, $$\Pi_{j=0}^{m-1}\rho_b(f^j(u))\ge (2^{-\frac{1}{m}})^m\Pi_{i=0}^{m-1}\rho_b(f^j(z))\ge   2^{-s_B-1}.$$

$4^{\circ}$ $\Pi_{j=0}^{m-1}N_b(f^j(u))\le  C_{1,0}^m2^{s_B}$ for some  constant  integer $C_{1,0}>0$.  

 If $d_0(f^j(y), \partial M)\le d_0(f^j(u), \partial M)$, it is obvious that  $N_b(f^j(u))\le N_b(f^j(y))$.  Assume $d_0(f^j(y), \partial M)>d_0(f^j(u), \partial M)$.   Then $$\rho_b(f^j(u))\le \rho_b(f^j(y))\le 2^{\frac{1}{m}}\rho_b(f^j(u)).$$
    So,  there exists a constant integer $C_{1,0} >1 $,  such that   any ball  $B(v, \rho_b(f^j(y)))$ with  $d_0(v,\partial M)\ge d_0(f^j(y), \partial M)$ can be covered by  $C_{1,0}$ balls  centered at $B(v, \rho_b(f^j(y)))$ with  radius $\frac{1}{2}\rho_b(f^j(u))\ge 2^{-\frac{1}{m}-1} \rho_b(f^j(y))$, 
which we denote as 
$$B(y_{v,1},  \frac{1}{2}\rho_b(f^j(u)) ), \cdots,  B(y_{v,C_1},  \frac{1}{2}\rho_b(f^j(u)) ).$$ 
  Moreover,  for any $z$ satisfying $d_0(z, \partial M)\ge  d_0(f^j(u), \partial M)$,  there exists $z'$ $ \in \bar{B}(z, \frac{1}{2^{l-2-j(L+1)}})$  $\subset$   $ B(z, \frac{1}{2}\rho_b(f^j(u)))$  satisfying  $d_0(z', \partial M)\ge d_0(f^j(y), \partial M)$.   Thus,  if $\{v: d_0(v,\partial M)\ge d_0(f^j(y), \partial M)\}$ is covered  by   $B(v_1, \rho_b(f^j(y))), \cdots, B(v_{N_b(f^j(y))}, \rho_b(f^j(y)))$,  then $B(y_{v_i,j},  \rho_b(f^j(u)) )$, $1\le i\le N_b(f^j(y))$, $1\le j\le C_{1,0}$, constitute a cover of  $\{z: d_0(z,\partial M)\ge d_0(f^j(u), \partial M)\}$, which gives 
\begin{eqnarray*}\label{N}N_b(f^j(u))\le C_{1,0}N_b(f^j(y)).\end{eqnarray*}  
So,  $$ \Pi_{j=0}^{m-1}N_b(f^j(u))\le  C_{1,0}^m  \Pi_{j=0}^{m-1}N_b(f^j(y)) \le C_{1,0}^m 2^{s_B}.$$

By $1^{\circ}$, $2^{\circ}$, $3^{\circ}$, $4^{\circ}$, 
   denoting $C_1=\lceil \frac{\log C_{1,0}}{\log 2}\rceil$,  we have $u\in A_{s_B+C_1m}$.  Together with  the arbitrariness of $u$, we  deduce that 
 $g(B)\subset A_{s_B+C_1m}$.

 \end{proof}

 Notice that $g(B)\cap E_j=\emptyset$ for any $ j< s_B$,  so 
\begin{eqnarray*}
	&&\sum_{s_B\le L}\sum_{s=n}^{+\infty} -\mu(g^{-1}(E_s)\cap B)\log \frac{\mu(g^{-1}(E_s)\cap B)}{\mu(B)}\\[2mm]
	&=& \sum_{s_B\le L}\,\,\sum_{s=s_B}^{s_B+C_1m} -\mu(g^{-1}(E_s)\cap B)\log \frac{\mu(g^{-1}(E_s)\cap B)}{\mu(B)}\\[2mm]
	&\le&  \sum_{s_B\le L}\,\,(C_1m+1)e^{-1}\mu(B)\le (C_1m+1)e^{-1}.
\end{eqnarray*}
Therefore, $$\I=\Big{(}\sum_{s_B> L}+\sum_{s_B\le L}\Big{)}\sum_{s=n}^{+\infty} -\mu(g^{-1}(E_s)\cap B)\log \frac{\mu(g^{-1}(E_s)\cap B)}{\mu(B)}\le 1+(C_1m+1)e^{-1}.$$

 {\bf (2) Estimate on $\II$}.
We have
\begin{eqnarray*}
&&\II=\sum_{s=n }^{+\infty}\sum_{F\in\mathcal{P}_n^{(l)} }\sum_{B\in\mathcal{P}_n^{(l)} }-\mu(F\cap g^{-1}(E_s)\cap g^{-1}(B))\log\frac{\mu(F\cap g^{-1}(E_s)\cap g^{-1}(B))}{\mu( F\cap g^{-1}(E_s))}\\[2mm]
&=&\sum_{F\in\mathcal{P}_n^{(l)}}\sum_{B\in\mathcal{P}_n^{(l)}, B\subset  E_n }-\mu(F\cap g^{-1}(B))\log\frac{\mu(F\cap g^{-1}(B))}{\mu( F\cap g^{-1}(E_n))}\\[2mm]
&&+\sum_{s=n+1 }^{+\infty}\sum_{F\in\mathcal{P}_n^{(l)}}\sum_{B\in\mathcal{P}_n^{(l)}, B\subset  E_s }-\mu(F\cap g^{-1}(B))\log\frac{\mu(F\cap g^{-1}(B))}{\mu( F\cap g^{-1}(E_s))}\\[2mm]
&:=& \II_1+\II_2.
\end{eqnarray*}

On the one hand,  for any $F\subset \mathcal{P}_n^{(l)}$, if $F$ is in some $E_k$, then $\diam(g(F))\le \frac{2}{2^{kl_1+l}}2^{k+1}\le \frac{1}{2^{l-2}}$, which implies   $\exp_{x_i^{(s)}}^{-1}(g(F)\cap \Gamma_{x_i^{(s)}}(\vep_s))$ is contained in a  cube with sides $\frac{2b}{2^{l-2}}$ whenever $g(F)\cap \Gamma_{x_i^{(s)}}(\vep_s) \not= \emptyset$ . 
Note that    there exist  at most  $k_s$ 
 such $ \Gamma_{x_i^{(s)}}(\vep_s)$,  and each of them is partitioned by cubes  with sides  $\frac{2\vep_s}{2^l}=\frac{2}{2^{sl_1+l}\sqrt{\dm}}$ in the constructoin of $\mathcal{P}_n^{(l)}$.   So, for any $s\ge n+1$, 
\begin{eqnarray*}\#\{B\in\mathcal{P}_n^{(l)}: B\cap g(F)\neq \emptyset, \,\,B\subset  E_s\} \le k_s\Big{(}\lceil (\frac{2b}{2^{l-2}})/(\frac{2}{2^{sl_1+l}\sqrt{\dm}})\rceil+2\Big{)}^{\dm}\le C_2 2^{4sl_1\dm},\end{eqnarray*}
where $C_2=C_{0}(\lceil 4b\sqrt{\dm}\rceil +2)^{\dm}$.   Hence,
\begin{eqnarray*}
\II_2&\le& \sum_{s=n+1}^{+\infty}\sum_{F\in\mathcal{P}_n^{(l)}}\mu(F\cap g^{-1}(E_s))\log (C_22^{4sl_1\dm})\\[2mm]
&=& \sum_{s=n+1 }^{+\infty}\mu(g^{-1}(E_s))\log(C_2 2^{4sl_1\dm})=\sum_{s=n+1 }^{+\infty}\mu(E_s)\log(C_2 2^{4sl_1\dm})<\log C_2+4.
\end{eqnarray*}

On the other hand,
 \begin{eqnarray*}
	\II_1&=& \,\,\sum_{F\in\mathcal{P}_n^{(l)}, F\subset E_n}\,\,\sum_{B\in\mathcal{P}_n^{(l)}, B\subset  E_n}-\mu(F\cap  g^{-1}(B))\log\frac{\mu(F\cap g^{-1}(B))}{\mu( F\cap g^{-1}(E_n))}\\[2mm]
	&&+ \sum_{F\in\mathcal{P}_n^{(l)}, F\subset  \cup_{ s\ge n+1}E_s}\,\,\sum_{B\in\mathcal{P}_n^{(l)}, B\subset  E_n }-\mu(F\cap g^{-1}(B))\log\frac{\mu(F\cap g^{-1}(B))}{\mu( F\cap g^{-1}(E_n))}\\[2mm]
	&:=& \II_{1,1}+\II_{1,2}.
\end{eqnarray*}
For any $F\subset E_s$, $s\ge n+1$,  it holds that
\begin{eqnarray*}\#\{B\in\mathcal{P}_n^{(l)}: B\cap g(F)\neq \emptyset, \,\,B\subset  E_n\} \le k_n\Big{(}\lceil (\frac{2b}{2^{l-2}})/(\frac{2}{2^{nl_1+l}\sqrt{\dm}})\rceil+2\Big{)}^{\dm}\le C_2 2^{4nl_1\dm},\end{eqnarray*}
which implies
\begin{eqnarray*}
	\II_{1,2}&\le&\sum_{F\in\mathcal{P}_n^{(l)},\,F\subset \cup_{s\ge n+1}E_s}\mu(F\cap  g^{-1}(E_n))\log (C_2 2^{4nl_1\dm})\\[2mm]
	&\le &  \mu(\cup_{s\ge n+1}E_s)\log  (C_2 2^{4nl_1\dm})\le \log C_2+4. 
\end{eqnarray*}
Now, it is  left to estimate $\II_{1,1}$, which is exactly the term contributing  Lyapunov exponent. Observe that
\begin{eqnarray*}
	\II_{1,1}
&\le& \sum_{F\in\mathcal{P}_n^{(l)},\,F\subset E_n}\,\log\#\{B\in\mathcal{P}_n^{(l)}, B\subset  E_n : g(F)\cap B\neq \emptyset \}\cdot\mu(F\cap g^{-1}(E_n))\\[2mm]
	&\le&\sum_{F\in\mathcal{P}_n^{(l)},\,F\subset E_n}\,\log\#\{B\in\mathcal{P}_n^{(l)}, B\subset  E_n : g(F)\cap B\neq \emptyset \}\cdot\mu(F).
\end{eqnarray*}

To estimate the last term in the above inequality, we consider a standard argument in advance.   For any   $\beta>0$,   denote 
by $\xi_{\beta}$  a partition of $\mathbb{R}^{\dm}$ into boxes as follows: 
$$ \Big{\{}[q_1\beta, (q_1+1)\beta]\times \cdots \times[q_{\dm}\beta, (q_{\dm}+1)\beta]: q_i\in \mathbb{Z},\,\,1\le i\le \dm\Big{\}}. $$

For any box $\Gamma(x; v_1,\cdots, v_{\dm}; a_1,\cdots, a_{\dm})=\{x+\sum_{1\le i\le \dm} t_ia_iv_i,\quad 0\le t_i\le 1\}$  with unit vectors $v_i$ and sides $a_i>0$   $(1\le i\le \dm)$, denote by $\phi(\Gamma(x; v_1,\cdots, v_{\dm}; a_1,\cdots, a_{\dm}))$ the minimum number of elements in $\xi_1$ whose union covers $\Gamma(x; v_1,\cdots, v_{\dm}; a_1,\cdots, a_{\dm})$.  

\begin{lemma}[Lemma 12.5 of \cite{Mane}]\label{AB}
	There exists a constant $c>0$ such that for any  box $\Gamma(x; v_1,\cdots, v_{\dm}; a_1,\cdots, a_{\dm})$,  $$\phi(\Gamma(x; v_1,\cdots, v_{\dm}; a_1,\cdots, a_{\dm}))\le c\Pi_{i=1}^{\dm} \max\{a_i,1\}.$$ 
\end{lemma}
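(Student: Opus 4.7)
The plan is to reduce the counting of unit cubes to a volume estimate and then to compute that volume using the zonotope formula together with Hadamard's inequality.

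First I would observe that $\phi(\Gamma)$ equals the number of integer points $q\in\mathbb{Z}^{\dm}$ for which $(q+[0,1]^{\dm})\cap\Gamma\neq\emptyset$, i.e., $q\in\Gamma+[-1,0]^{\dm}$. Since the integer translates of $[0,1]^{\dm}$ have pairwise disjoint interiors, and for each such $q$ the cube $q+[0,1]^{\dm}$ lies inside $\Gamma+[-1,1]^{\dm}$, one gets
\[
\phi(\Gamma)\le \vol\bigl(\Gamma+[-1,1]^{\dm}\bigr).
\]

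Next I would make the crucial reduction to the case $a_i\ge 1$ for every $i$. Since $[0,a_i]\subset[0,\max\{a_i,1\}]$, the box $\Gamma$ is contained (up to translation) in the enlarged box $\Gamma'$ with sides $b_i:=\max\{a_i,1\}$ along the same unit vectors $v_i$. Therefore $\phi(\Gamma)\le\phi(\Gamma')$ and we may assume $a_i\ge 1$ throughout.

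Then I would compute the Minkowski sum: $\Gamma+[-1,1]^{\dm}$ is, up to translation, the zonotope $Z$ generated by the $2\dm$ vectors $\{a_i v_i\}_{i=1}^{\dm}\cup\{2e_j\}_{j=1}^{\dm}$. By the classical zonotope-volume formula,
\[
\vol(Z)=\sum_{\substack{S_1\subset[\dm],\;S_2\subset[\dm]\\ |S_1|+|S_2|=\dm}}\Bigl|\det\bigl[(a_iv_i)_{i\in S_1},\,(2e_j)_{j\in S_2}\bigr]\Bigr|.
\]
Since the $v_i$ and $e_j$ are unit vectors, Hadamard's inequality bounds each determinant by $\prod_{i\in S_1}a_i\cdot 2^{|S_2|}$. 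Using $a_i\ge 1$, one has $\prod_{i\in S_1}a_i\le\prod_{i=1}^{\dm}a_i$, and summing over the $\binom{2\dm}{\dm}$ admissible index pairs $(S_1,S_2)$ gives
\[
\vol(Z)\le c\prod_{i=1}^{\dm}a_i\;=\;c\prod_{i=1}^{\dm}\max\{a_i,1\},
\]
for a constant $c=c(\dm)$, which is the desired estimate.

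The main obstacle is the bookkeeping for the Minkowski sum as a zonotope and the consistent application of Hadamard's inequality across all mixed choices of generators. In particular, the enlargement step is indispensable: without the hypothesis $a_i\ge 1$ one cannot absorb the partial product $\prod_{i\in S_1}a_i$ into the full product $\prod_{i=1}^{\dm}a_i$, and the combinatorial sum would not collapse to a constant multiple of $\prod_i \max\{a_i,1\}$.
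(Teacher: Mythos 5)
Your proof is correct. Note, however, that the paper does not actually prove this statement: it is quoted verbatim as Lemma 12.5 of Ma\~n\'e's book and used as a black box, so any proof you give is necessarily ``different from the paper's.'' Your route --- bounding the number of grid cubes meeting $\Gamma$ by $\vol(\Gamma+[-1,1]^{\dm})$, reducing to $a_i\ge 1$ by monotonicity of $\phi$ under enlargement of the box, and then evaluating the Minkowski sum exactly as a zonotope and applying Hadamard's inequality termwise --- is a clean, self-contained replacement for Ma\~n\'e's more hands-on counting argument (which proceeds by comparing the cubes meeting the box with a bounded neighborhood of it and estimating directly). What your approach buys is an explicit constant $c=\binom{2\dm}{\dm}2^{\dm}$ and a mechanical verification with no case analysis; the cost is invoking the zonotope volume formula, which is heavier machinery than the statement needs. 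Two minor points of precision: $\phi(\Gamma)$ is the \emph{minimal} number of covering cubes, so it is bounded above by (rather than equal to) the number of grid cubes meeting $\Gamma$ --- but only the inequality is used, so this is harmless; and in the final display you should write $\vol(Z)\le c\prod_i a_i$ with the identification $a_i=\max\{a_i,1\}$ justified by the earlier reduction, which you do state. Neither issue affects the validity of the argument.
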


Denote by  $(D_xg)^{\wedge \kappa}$  the linear map on the $\kappa$-th exterior algebra
of the tangent space $T_xM$ induced by $D_xg$ and let  $\|(D_xg)^{\wedge}\| = \max_{1\le \kappa \le \dm}  \|(D_xg)^{\wedge \kappa}\|  $. 
\begin{lemma} \label{bound of number}There exists $C_3>0$ such that for any  $F\in\mathcal{P}_n^{(l)}$ and $F\subset E_n$, we have for any $x\in F$,
$$\#\{B\in\mathcal{P}_n^{(l)}, B\subset E_n : g(F)\cap B\neq \emptyset \} \le C_3\|(D_xg)^{\wedge}\|.$$
\end{lemma}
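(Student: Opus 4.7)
The plan is to transfer the counting problem to the tangent space at $g(x)$ via the exponential chart, approximate $g$ by its derivative on the small cube $F$, and apply Lemma \ref{AB} to a parallelepiped that contains $\exp_{g(x)}^{-1}(g(F))$.

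First, since $F\subset E_n\subset A_n$, both $x$ and $g(x)$ sit in $b$-regular regions: $\rho_b(\cdot)\ge 2^{-n}$ nearby and, by Step~1 of the proof of Lemma \ref{g(B) including}, $\diam(g(F))\le 2^{2-l}\ll 2^{-n}$ for $l$ large. Hence $\exp_x$ and $\exp_{g(x)}$ are $b$-bilipschitz on the relevant balls. Write $\hat F=\exp_x^{-1}(F)\subset T_xM$; the element $\Gamma_j^{(n,i,l)}$ containing $F$, together with the change of chart from $x_i^{(n)}$ to $x$ (which costs at most the factor $b$), shows that $\hat F$ fits inside a Euclidean cube of side $\le 2b\sqrt{\dm}\,\vep_n/2^l$ about $0$.

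Next, define $\tilde g=\exp_{g(x)}^{-1}\circ g\circ\exp_x$ on $\hat F$. The iterated distortion estimate used in Step~1 of Lemma \ref{g(B) including}, in which the product $\prod_{j=0}^{m-1}2^{1/m}=2$ absorbs all pointwise $(2^{1/m}-1)$ corrections, gives $\|D\tilde g(v)-D\tilde g(0)\|$ bounded by an absolute fraction of $\|D\tilde g(0)\|$ for $v\in\hat F$. Letting $\{e_i\}$ be a singular-value frame of $D_xg$, it follows that $\tilde g(\hat F)$ is contained in a parallelepiped whose half-axis along $e_i$ is at most $C'\sigma_i(D_xg)\,\vep_n/2^l$, with $C'=C'(b,\dm)$. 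On the target side, for any admissible $B$ the set $\exp_{g(x)}^{-1}(B)$ sits in a Euclidean cube of side $\le 2b\sqrt{\dm}\,\vep_n/2^l$ (same bilipschitz argument at the relevant $x_{i'}^{(n)}$), and these sets have overlap multiplicity $\le(4\lceil b\dm\rceil)^{\dm}$ by the packing discussion of Section \ref{partition}.

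Finally, rescaling $T_{g(x)}M$ by $2^l/(2b\sqrt{\dm}\vep_n)$ turns the target cubes into the unit grid $\xi_1$ and the parallelepiped into one with sides at most $C''\sigma_i(D_xg)$. Lemma \ref{AB} then bounds the number of unit cubes it meets by $c\prod_{i=1}^{\dm}\max\{C''\sigma_i(D_xg),1\}\le c(C'')^{\dm}\|(D_xg)^{\wedge}\|$, using that $\|(D_xg)^{\wedge\kappa}\|=\sigma_1\cdots\sigma_\kappa$ and that $\max_\kappa\sigma_1\cdots\sigma_\kappa=\prod_i\max\{\sigma_i,1\}$. Multiplying by the overlap constant yields the claim with a $C_3=C_3(b,\dm)$.

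The main obstacle is the linearization step: $\|D_xg\|$ can be as large as $2^{n+1}$ on $E_n$, yet the multiplicative error between $\tilde g(\hat F)$ and $D_xg(\hat F)$ must be controlled by a constant independent of $n$, $l$ and $m$. This is precisely what the inductive distortion bookkeeping used in Lemma \ref{g(B) including}, combined with the choices of $l_1$ and $l$ already imposed in Section \ref{partition}, delivers; it is also what forces $C'$ and $C''$, and thus $C_3$, to depend only on $b$ and $\dm$.
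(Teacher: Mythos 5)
Your overall architecture (exponential charts, compare $\tilde g$ with its linearization, apply Lemma \ref{AB} after rescaling, multiply by the overlap constant $(4\lceil b\dm\rceil)^{\dm}$) matches the paper's, but the linearization step contains a genuine gap, and it is exactly the step you flag as ``the main obstacle.'' You claim that the distortion bookkeeping of Lemma \ref{g(B) including} yields a bound on $\|D\tilde g(v)-D\tilde g(0)\|$ (the operator norm of the \emph{difference} of derivatives) by a fraction of $\|D\tilde g(0)\|$, and hence that $\tilde g(\hat F)$ sits in a parallelepiped whose half-axis along the $i$-th singular direction is $C'\sigma_i(D_xg)\vep_n/2^l$. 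Hypothesis (A) controls only $\bigl|\,\|D_xf\|-\|D_yf\|\,\bigr|$, the difference of the scalar norms --- the paper's remark explicitly says it adopts this weaker form precisely to avoid comparing operators in different tangent spaces --- and Step 1 of Lemma \ref{g(B) including} accordingly produces only scalar conclusions ($\|D_yg\|\le 2^{k+1}$, diameter bounds). No control of $\|D\tilde g(v)-D\tilde g(0)\|$ as an operator, let alone direction-by-direction control, is available. In particular, in a direction where $\sigma_i(D_xg)$ is tiny there is no reason for the nonlinear image to stay within $C'\sigma_i\cdot\diam(\hat F)$ of the linear image; and even if you did have $\|D\tilde g(v)-D\tilde g(0)\|\le\delta\|D\tilde g(0)\|$, the resulting additive error $\delta\sigma_1(D_xg)\,\vep_n2^{-l}$ would appear in \emph{every} axis, so after rescaling the product would be of order $\prod_i\max\{\sigma_i+\delta\sigma_1,1\}$, which can be as large as $(\delta\sigma_1)^{\dm-1}\sigma_1\gg\|(D_xg)^{\wedge}\|$ when the singular values are very disparate (e.g.\ $\sigma_1\sim 2^{n+1}$ and the rest small). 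So the multiplicative parallelepiped containment is both unjustified and, in the natural repaired form, insufficient.

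The paper sidesteps this with a different decomposition that you should adopt: it first applies Lemma \ref{AB} to the purely linear image $(D_{\tilde x}\tilde g)(\Gamma)$, obtaining at most $cb^2\|(D_xg)^{\wedge}\|$ grid cubes $G$, and then shows that for each such $G$ the set $\tilde g\bigl((D_{\tilde x}\tilde g)^{-1}(G)\cap\Gamma\bigr)$ has diameter at most $\diam(G)+\diam(\Gamma)\le 4/2^{nl_1+l}$, i.e.\ an \emph{additive} error at the grid scale, coming from the directional estimate $\|(D_z\tilde g)\tau\|\le\|(D_{\tilde x}\tilde g)\tau\|+1$ integrated along the segment from $y$ to $z$ (the term $\int_\sigma\|(D_{\tilde x}\tilde g)\tau\|$ collapses to $\|D_{\tilde x}\tilde g(y-z)\|\le\diam(G)$ because $\tau$ is constant). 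Each linear grid cube therefore spawns at most $(\lceil 4\sqrt{\dm}\rceil+2)^{\dm}$ actual grid cubes, and the count for $\tilde g(\Gamma)$ exceeds the linear count only by this fixed factor. This keeps the nonlinear correction additive at the scale of the partition elements rather than multiplicative in the singular values, which is what makes $C_3$ depend only on $b$, $c$ and $\dm$. Your remaining steps (overlap multiplicity, rescaling, the identity $\|(D_xg)^{\wedge\kappa}\|=\sigma_1\cdots\sigma_\kappa$) are consistent with the paper.
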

\begin{proof}
Note that $F$ is a subset of some  $\Gamma_j^{n,i,l}$.   The image of  $\Gamma_j^{n,i,l}$ by $g$  intersects at most $(4\lceil b\dm\rceil)^{\dm}$  elements $\Gamma_{x_v^{(n)}}(\vep_n)$.       By a scalling of the  size, applying Lemma \ref{AB}, it holds that for  $\Gamma_j^{n,i,l}:=\exp_{x_{u}^{(n)}}(\Gamma(\sum_{1\le i\le \dm}k_{i}\vep_n2^{- l}e_i;  e_1,\cdots, e_{\dm}; $ $ 2\vep_n2^{- l},\cdots,  2\vep_n 2^{- l}):=\exp_{x_{u}^{(n)}}(\Gamma)$, 
\begin{eqnarray*}&&\#\{G\in\xi_{2\vep_n2^{-l}}: (D_{\exp^{-1}_{x_{u}^{(n)}}(x)}(\exp_{x_{v}^{(n)}}^{-1}g\exp_{x_u^{(n)}}))(\Gamma)\cap G\neq \emptyset \}\\[2mm]
	& \le& c\|D_{\exp^{-1}_{x_{u}^{(n)}}(x)}(\exp_{x_{v}^{(n)}}^{-1}g\exp_{x_u^{(n)}})\| \\[2mm]
	&\le& cb^2\|(D_xg)^{\wedge}\|.\end{eqnarray*}
\\
\begin{figure}[h]
	\centering
	% Requires \usepackage{graphicx}
	\includegraphics[width=7cm]{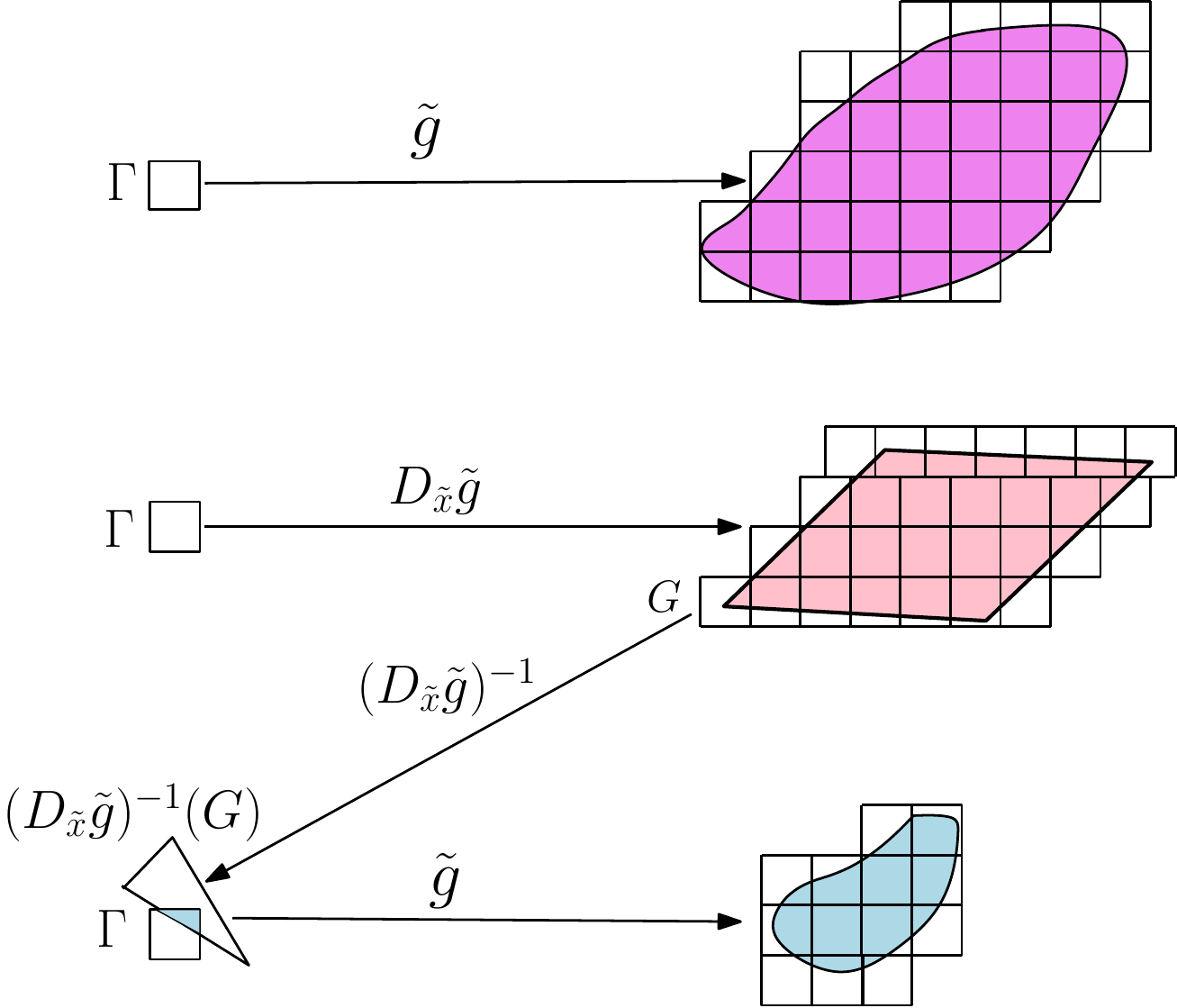}\\
	\caption*{Distortion of boxes}	
\end{figure}

$\,$\\

\noindent{}Denote  $\tilde{x}=\exp^{-1}_{x_{u}^{(n)}}(x)$, $\tilde{g}=\exp_{x_{v}^{(n)}}^{-1}g\exp_{x_u^{(n)}}$.   Take arbitrary  $G\in\xi_{2\vep_n2^{-l}}$ such that $(D_{\tilde{x}} \tilde{g})^{-1}(G)\cap \Gamma\neq \emptyset$. Then for any $y,z\in (D_{\tilde{x}} \tilde{g})^{-1}(G)\cap \Gamma$, we denote $\tau=\frac{y-z}{\|y-z\|}$ and $\sigma$ to be the line segment linking $y$ and $z$. Then,  for the chosen  $n$,  taking $l$ sufficently large,  we have 
\begin{eqnarray*}
d(\tilde{g}(y), \tilde{g}(z))
&=&\int_{\sigma} \|(D_{z}\tilde{g})\tau\|dz \\[2mm] &\le& \int_{\sigma} (\|(D_{\tilde{x}}\tilde{g})\tau\|+1)dz\\[2mm] &\le& \diam(G)+ \lenghth(\sigma)\\[2mm]  &\le& \diam(G)+ \diam(\Gamma)\\[2mm]  &\le& \frac{2}{2^{nl_1+l}}+ \frac{2}{2^{nl_1+l}}=\frac{4}{2^{nl_1+l}}.
\end{eqnarray*}
Hence, 
\begin{eqnarray*}&&\#\{G\in \xi_{2\vep_n2^{-l}}: G\cap \tilde{g}(\Gamma) \neq \emptyset\}\\[2mm]
	&\le&  cb^2\|(D_xg)^{\wedge}\|\cdot (\lceil \frac{\frac{8}{2^{nl_1+l}}}{2\vep_n2^{-l}}\rceil+2)^{\dm}\\[2mm]
	&=& cb^2\|(D_xg)\|^{\wedge}(\lceil 4\sqrt{\dm} \rceil+2)^{\dm}. \end{eqnarray*}
Considering $B$ as the part of the image of $G$ by $\exp_{x_{v}^{(n)}}$  in the construction of $\mathcal{P}_n^{(l)} $ and letting $C_3=cb^2(\lceil4\sqrt{\dm} \rceil+2)^{\dm}(4\lceil b\dm\rceil)^{\dm}$, one gets the estimate of the lemma.
\end{proof}
By Lemma \ref{bound of number}, we have

\begin{eqnarray*}
	\II_{1,1}&\le & \sum_{F\in\mathcal{P}_n^{(l)},\,F\subset E_n}\,\,\int_F \log (C_3\|(D_xg)^{\wedge}\|)d\mu\\[2mm]
	&\le&\log C_3+ \int_{M}  \log \|(D_xg)^{\wedge}\|d\mu\\[2mm]
	&=& \log C_3+ \int_{M}  \log \|(D_xf^m)^{\wedge}\|d\mu.
\end{eqnarray*}

\noindent{\it Proof of Theorem \ref{MR Thm}}. For any $m\in \mathbb{N}$,    sufficently large $n$ and $l$, we have obtained
\begin{eqnarray*}
&&h_{\mu}(f^m,\mathcal{P}_n^{(l)})\le \I+\II_2+\II_{1,2}+\II_{1,1}\\[2mm]
&\le& 1+(C_1m+1)e^{-1}+(\log  C_2 +4)+(\log C_2+4)+\log  C_3+ \int_{M}  \log \|(D_xf^m)^{\wedge}\|d\mu.
\end{eqnarray*}
Letting $l\to +\infty$, we get
\begin{eqnarray*}
&&h_{\mu}(f^m)\\[2mm]
&\le& 1+(C_1m+1)e^{-1}+2(\log  C_2 +4)+ \log C_3+ \int_M  \log \|(D_xf^m)^{\wedge}\|d\mu.
\end{eqnarray*}
Therefore,  
\begin{eqnarray*}
	h_{\mu}(f)=\lim_{m\to +\infty}\frac{1}{m}h_{\mu}(f^m)
	\le C_1e^{-1}+ \int_{M} \sum_{\lambda_i(f, x)> 0} \lambda_i(f, x) \,d\mu.
\end{eqnarray*}
Furthermore,  \begin{eqnarray*}
	h_{\mu}(f)=\lim_{m\to +\infty}\frac{1}{m}h_{\mu}(f^m)
	&\le&  \lim_{m\to +\infty}\frac{1}{m} \int_{M} \sum_{\lambda_i(f^m, x)> 0} \lambda_i(f^m, x) \,d\mu\\[2mm]&=&\int_{M} \sum_{\lambda_i(f, x)> 0} \lambda_i(f, x) \,d\mu.
\end{eqnarray*}
This completes the proof of Theorem \ref{MR Thm}. 
\hfill $\Box$

\end{document}